\newtheorem{theorem}{Theorem}[section]
\newtheorem{definition}[theorem]{Definition}
\newtheorem{proposition}[theorem]{Proposition}
\newtheorem{lemma}[theorem]{Lemma}
\begin{document}

\title[Block-transposed Wishart matrices]{Asymptotic eigenvalue distributions\\of block-transposed Wishart matrices}

\author{Teodor Banica}
\address{T.B.: Department of Mathematics, Cergy-Pontoise University, 95000 Cergy-Pontoise, France. {\tt teodor.banica@u-cergy.fr}}

\author{Ion Nechita}
\address{I.N.: CNRS, Laboratoire de Physique Th\'eorique, IRSAMC, Universit\'e de Toulouse, UPS, 31062 Toulouse, France. {\tt nechita@irsamc.ups-tlse.fr}}

\subjclass[2000]{60B20 (46L54, 81P45)}
\keywords{Wishart matrix, Partial transposition, Free Poisson law}

\begin{abstract}
We study the partial transposition ${W}^\Gamma=(\mathrm{id}\otimes \mathrm{t})W\in M_{dn}(\mathbb C)$ of a Wishart matrix $W\in M_{dn}(\mathbb C)$ of parameters $(dn,dm)$. Our main result is that, with $d\to\infty$, the law of $m{W}^\Gamma$ is a free difference of free Poisson laws of parameters $m(n\pm 1)/2$. Motivated by questions in quantum information theory, we also derive necessary and sufficient conditions for these measures to be supported on the positive half line. 
\end{abstract}

\maketitle

\section*{Introduction}

The partial transposition of a $d \times d$ block matrix $W\in M_d(\mathbb C)\otimes M_n(\mathbb C)$ is the matrix ${W}^\Gamma$ obtained by transposing each of the $n \times n$ blocks of $W$. That is, we let ${W}^{\Gamma}=(\mathrm{id}\otimes \mathrm{t})W$, where $\mathrm{id}$ is the identity of $M_d(\mathbb C)$, and $\mathrm{t}$ is the transposition of $M_n(\mathbb C)$. The partial transposition operation can be defined using coordinates, as follows. A particular decomposition $\mathbb C^{dn} =\mathbb C^d\otimes\mathbb C^n$ induces a decomposition $M_{dn}(\mathbb C)=M_d(\mathbb C)\otimes M_n(\mathbb C)$. The entries of a matrix $W \in M_d(\mathbb C)\otimes M_n(\mathbb C)$ can be indexed by four indices, $i,j\in\{1,\ldots,d\}$ identifying the block of $A$ the matrix entry belongs to, and $a,b\in\{1,\ldots,n\}$ fixing the position of the matrix entry inside the block. Then, the partial transposed matrix $W^\Gamma$ has coordinates
$${W}^{\Gamma}_{ia,jb}=W_{ib,ja}.$$

Motivated by questions in quantum information theory, the partial transposition operation for Wishart matrices was studied by Aubrun \cite{aub}, who showed that, for certain special values of the parameters, the empirical spectral distribution of ${W}^{\Gamma}$ converges in moments to a non-centered semicircular distribution. In this paper we discuss the general case, our main result being as follows.

\medskip

\noindent {\bf Theorem A.} {\em Let $W$ be a complex Wishart matrix of parameters $(dn,dm)$. Then, with $d\to\infty$, the empirical spectral distribution of $m{W}^{\Gamma}$ converges in moments to a free difference of free Poisson distributions of respective parameters $m(n\pm 1)/2$.}

\medskip

Observe that if one applies the transposition map on the first factor of the tensor product $M_d(\mathbb C)\otimes M_n(\mathbb C)$ instead of the second one, the spectral distribution of ${W}^{\Gamma}$ remains unchanged. Indeed, this follows from $(\mathrm{t}\otimes \mathrm{id}) = (\mathrm{t}\otimes \mathrm{t}) \circ (\mathrm{id}\otimes \mathrm{t})$ and from the fact that applying the global transposition leaves the spectral distribution unchanged.

The above theorem basically generalizes Aubrun's result in \cite{aub}, modulo some standard free probability facts, regarding the relationship between the free Poisson laws and the semicircle laws. Also, as a remark of algebraic interest, in the formal limit $m=t/n\to 0$, the law that we obtain is a free difference of free Poisson laws having the same parameter, namely $t/2$, so it is a (modified) free Bessel law in the sense of \cite{bb+}.

Our second result regards the various properties of the limiting measure. By using tools from free probability theory and from classical analysis, we obtain the following result.

\medskip

\noindent {\bf Theorem B.} {\em The limiting measure found in Theorem A has the following properties:
\begin{enumerate}
\item It has at most one atom, at $0$, of mass $\max\{1-mn, 0\}$.

\item It has positive support iff $n\leq m/4+1/m$ and $m \geq 2$.
\end{enumerate}}

\medskip

The main motivation for the above results comes from quantum information theory. In quantum information theory, the partial transposition map is known to be an \emph{entanglement witness}: it allows to test if a quantum state (represented by a positive, unit trace matrix) is entangled, in the following sense. If a bi-partite quantum state $\rho \in M_d(\mathbb C)\otimes M_n(\mathbb C)$ is separable (i.e. it can be written as a convex combination of product states $\rho^{(1)}_i \otimes \rho^{(2)}_i$), then its partial transposition $\rho^\Gamma$ is also a quantum state. However, if $\rho$ is entangled, then $\rho^\Gamma$ may fail to be positive. In the case where $\rho^\Gamma$ is a positive matrix, the quantum state $\rho$ is said to be PPT (Positive Partial Transpose). Hence, separable states are always PPT and non-PPT states are necessarily entangled. The equivalence of entanglement and non-PPT is known to hold only for total dimension smaller than 6 ($2\times 2$ or $2 \times 3$ product systems) and it fails for larger dimensions, in the sense that there exist PPT entangled states. In the same spirit as in \cite{aub}, the results regarding the positivity of the support of the limit measure can be interpreted as results about typicality of PPT states for large quantum systems. Wishart matrices (normalized to have unit trace) are known to be physically reasonable models for random quantum states on a tensor product $\mathbb C^d \otimes \mathbb C^n$, the parameter $m$ of the Wishart distribution being related to the size of some environment $\mathbb C^{dm}$ needed to define the state. So, as a conclusion, Theorem A and Theorem B above indicate that, when $m>2$ and $n<m/4+1/m$, a typical state in $\mathbb C^d \otimes \mathbb C^n$ is PPT.

The paper is organized as follows: in \ref{sec:moments} we present a combinatorial formula for the asymptotic moments of $m{W}^{\Gamma}$, in \ref{sec:mgf} we find an equation for the corresponding moment generating function, and in \ref{sec:main} we state and prove the main result. In sections \ref{sec:examples} and \ref{sec:prop} we discuss some of the properties of the asymptotic eigenvalue distributions (examples, atoms, density, positivity), and the final section \ref{sec:final} contains a few concluding remarks.

\subsection*{Acknowledgments}

We would like to thank the ANR projects Galoisint and Granma for their financial support during the accomplishment of the present work. Part of this work was done when I.N. was a postdoctoral fellow at the University of Ottawa where he was supported by NSERC discovery grants and an ERA.

\section{Formula for moments}\label{sec:moments}

We recall that a complex Wishart matrix $W$ of parameters $(dn, dm)$ is defined as $W = (dm)^{-1} GG^*$, where $G$ is a $dn \times dm$ matrix with i.i.d. complex Gaussian $\mathcal N(0,1)$ entries. Consider the block-transpose matrix ${W}^{\Gamma}$, constructed in the introduction. We denote by $\mathrm{tr}:M_{dn}(\mathbb C)\to\mathbb C$ the normalized trace, $\mathrm{tr}(W)=(dn)^{-1}\sum W_{ii}$.

We recall a number of results from the combinatorial theory of noncrossing partitions; see \cite{nsp} for a detailed presentation of the theory. For a permutation $\sigma \in S_p$, we introduce the following standard notation:
\begin{itemize}
\item $\# \sigma$ is the number of cycles of $\sigma$;
\item $|\sigma|$ is its length, defined as the minimal number $k$ such that $\sigma$ can be written as a product of $k$ transpositions. The function $(\sigma, \pi) \to |\sigma^{-1}\pi|$ defines a distance on $S_p$. One has $\#\sigma + |\sigma| = p$.
\item $\mathrm{e}(\sigma)$ is the number of blocks of \emph{even} size of $\sigma$.
\end{itemize}
Let $\gamma\in S_p$ be a fixed arbitrary full cycle (an element of order $p$ in $S_p$). The set of permutations $\sigma \in S_p$ which saturate the triangular inequality $|\sigma| + |\sigma^{-1} \gamma| = |\gamma| = p-1$ is in bijection with the set $NC(p)$ of noncrossing partitions of $\{1,\ldots,p\}$. We call such permutations \emph{geodesic} and we shall not distinguish between a non crossing partition and its associated geodesic permutation. We also recall a well known bijection between $NC(p)$ and the set $NC_2(2p)$ of noncrossing \emph{pairings} of $2p$ elements. To a noncrossing partition $\pi \in NC(p)$ we associate an element $\mathrm{fat}(\pi) \in NC_2(2p)$ as follows: for each block $\{i_1, i_2, \ldots, i_k\}$ of $\pi$, we add the pairings $\{2i_1-1, 2i_k\}, \{2i_1, 2i_2-1\}, \{2i_2, 2i_3-1\}, \ldots, \{2i_{k-1}, 2i_k-1\}$ to $\mathrm{fat}(\pi)$. The inverse operation is given by collapsing the elements $2i-1, 2i \in \{1, \ldots, 2p\}$ to a single element $i \in \{1, \ldots, p\}$. The number of blocks of $\pi$ and $\mathrm{fat}(\pi)$ are related by $\# \pi = \#(\mathrm{fat}(\pi) \vee \rho_{12})$, where $\vee$ is the join operation on the lattice $NC_2(2p)$ and $\rho_{12} = (12)(34)\cdots(2p-1\, 2p)$ is the fattened identity permutation. Similarly, $ \#(\pi \gamma) = \#(\mathrm{fat}(\pi) \vee \rho_{14})$, where $\rho_{14}$ is the pairing that corresponds to the fattening of the inverse full cycle $\gamma^{-1}(i) = i-1$. More precisely, $\rho_{14}$ pairs an element $2i$ with $2(i-1)-1 = 2i-3$, or, equivalently, an element $i \in \{1, \ldots, 2p\}$ with $i+(-1)^{i+1}3$.

The following combinatorial lemma is essential in the proof of the moment formula in Theorem \ref{thm:moment-formula}. 
\begin{lemma}\label{lem:even-blocks}
For every noncrossing partition $\pi \in NC(p)$, 
$$1+\mathrm{e}(\pi) = \#(\pi \gamma).$$
\end{lemma}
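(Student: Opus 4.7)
My plan is to prove the identity $1 + \mathrm{e}(\pi) = \#(\pi\gamma)$ by induction on $p$, exploiting the fact that every noncrossing partition with at least two blocks admits a proper \emph{interval block}, i.e., a block of the form $B = \{j, j+1, \ldots, j+k-1\}$ strictly contained in $\{1, \ldots, p\}$. The case $p = 1$ is trivial. In the remaining single-block case $\pi = \{\{1, \ldots, p\}\}$, the associated geodesic permutation is $\gamma$ itself, so $\pi\gamma = \gamma^2$, which has $\gcd(2, p)$ cycles; this equals $2$ if $p$ is even and $1$ if $p$ is odd, in agreement with $1 + \mathrm{e}(\pi)$ (the unique block has even size iff $p$ is even).

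Otherwise $\pi$ has at least two blocks, and I fix an interval block $B = \{j, \ldots, j + k - 1\}$ of $\pi$ with $k < p$. Let $\pi' \in NC(p - k)$ denote the restriction of $\pi$ to $\{1, \ldots, p\} \setminus B$ (with its canonical relabeling), and let $\gamma'$ be the full cycle on $p - k$ points. The heart of the argument is to compare the cycle structures of $\pi\gamma$ and $\pi'\gamma'$ near $B$. Using the explicit rule $\pi\gamma(i) = \pi(i + 1)$ together with the fact that $\pi$ acts on $B$ as the cyclic shift $j \to j + 1 \to \cdots \to j + k - 1 \to j$, a direct tracing of the $\pi\gamma$-orbits starting at $j - 1$ and at $j$ yields the following dichotomy: if $k$ is odd, then all $k$ elements of $B$ are threaded into a single chain $j - 1 \to j + 1 \to j + 3 \to \cdots \to j \to j + 2 \to \cdots \to j + k - 1 \to \pi(j + k)$, which after deletion of the elements of $B$ coincides with the corresponding orbit of $\pi'\gamma'$; if $k$ is even, the computation splits off an independent cycle $j \to j + 2 \to \cdots \to j + k - 2 \to j$ of length $k/2$, leaving the chain $j - 1 \to j + 1 \to \cdots \to j + k - 1 \to \pi(j + k)$ which, after deletion of the odd-offset elements of $B$, again matches an orbit of $\pi'\gamma'$.

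Collecting these observations, $\#(\pi\gamma) = \#(\pi'\gamma')$ when $k$ is odd and $\#(\pi\gamma) = \#(\pi'\gamma') + 1$ when $k$ is even. Since $\mathrm{e}(\pi') = \mathrm{e}(\pi)$ in the odd case and $\mathrm{e}(\pi') = \mathrm{e}(\pi) - 1$ in the even case, the inductive hypothesis $\#(\pi'\gamma') = 1 + \mathrm{e}(\pi')$ yields $\#(\pi\gamma) = 1 + \mathrm{e}(\pi)$ in both subcases, completing the induction.

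The main obstacle I anticipate is the combinatorial bookkeeping of the $\pi\gamma$-orbits around the interval block $B$: one must correctly interpret $j - 1$ and $j + k$ modulo $p$ when $B$ abuts the endpoints of $\{1, \ldots, p\}$, and verify the two parity subcases carefully (in particular, the degenerate case $k = 1$, which reduces to simply splicing a fixed point of $\pi$ out of a cycle of $\pi\gamma$, as well as the edge case $k = 2$, where the length-$k/2$ cycle collapses to a single fixed point). No deeper free-probabilistic or analytic tool should be required; the identity is a purely combinatorial statement about noncrossing partitions.
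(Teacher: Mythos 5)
Your proof is correct, and it shares the paper's inductive skeleton (remove an interval block of size $k$, split on the parity of $k$) while using a genuinely different core computation. The paper deliberately routes the argument through the fattening bijection $NC(p) \cong NC_2(2p)$ and the identity $\#(\pi\gamma) = \#(\mathrm{fat}(\pi) \vee \rho_{14})$, then counts connected components of superimposed pairing diagrams pictorially. You instead compute directly with the permutation $\pi\gamma$, observing that $\pi\gamma(i) = \pi(i+1)$ and tracing orbits near the interval block $B$. I checked your orbit chains against small cases ($k=3$ odd: $j-1 \to j+1 \to j \to j+2 \to \pi(j+3)$; $k=4$ even: the chain $j-1 \to j+1 \to j+3 \to \pi(j+4)$ plus the separate cycle $j \to j+2 \to j$), and they are right, including the claim that restriction to $\{1,\dots,p\}\setminus B$ followed by relabeling turns the surviving transitions into exactly those of $\pi'\gamma'$. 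Your approach is more elementary in that it avoids the fattening machinery entirely, at the cost of somewhat heavier bookkeeping (the modular-arithmetic edge cases you flag, which are handled correctly though not written out in full, and which the paper sidesteps by reducing WLOG, via rotation invariance of the statement, to the block $\{1,\dots,k\}$). The paper's diagrammatic route is slicker once the fattening correspondence is set up, and fits naturally with the combinatorics used elsewhere in the moment computation; yours is more self-contained and would be easier to verify for a reader unfamiliar with that correspondence.
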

\begin{proof}
We use a recurrence over the number of blocks of $\pi$. If $\pi$ has just one block, its associated geodesic permutation is $\gamma$ and one has
$$\#(\gamma^2) = \begin{cases}
1, &\quad \text{ if } p \text{ is odd;}\\ 
2, &\quad \text{ if } p \text{ is even.}\\
\end{cases}$$
For partitions $\pi$ with more than one block, we can assume without loss of generality that $\pi = \hat 1_k \sqcup \pi'$, where $\hat 1_k$ is a contiguous block of size $k$. Recall that the number of blocks of the (not necessarily geodesic) permutation $\pi\gamma$ is given by $ \#(\pi \gamma) = \#(\mathrm{fat}(\pi) \vee \rho_{14})$, where $\rho_{14} \in \Pi_2(2p)$ is the pair partition which pairs an element $i$ with $i+(-1)^{i+1}3$. 

If $k$ is an even number $k=2r$, then the partition $\mathrm{fat}(\hat 1_{2r} \sqcup \pi') \vee \rho_{14}$ contains the block $(1 \, 4 \, 5 \, 8 \, \cdots 4r-3 \, 4r)$ along with the blocks coming from elements of the form $4i+2, 4i+3$ from $\{1, \ldots, 4r\}$ and from $\pi'$. Since we are interested only in the number of blocks statistic (and not in the contents of each block), we can count the blocks of the join of two partitions by drawing them one beneath the other and counting the number of connected components of the curve, without taking into account the possible crossings, see Figure \ref{fig:fat-even}. We conclude that $\#(\mathrm{fat}(\pi) \vee \rho_{14}) = 1 + \#(\mathrm{fat}(\pi') \vee \rho'_{14})$ where $\rho'_{14}$ is $\rho_{14}$ restricted to the set $\{2k+1, 2k+2 \ldots, 2p\}$. If $k$ is odd, $k=2r+1$, there is no extra block appearing, so  $\#(\mathrm{fat}(\pi) \vee \rho_{14}) = \#(\mathrm{fat}(\pi') \vee \rho'_{14})$, see Figure \ref{fig:fat-odd}. 
\begin{figure}
\includegraphics{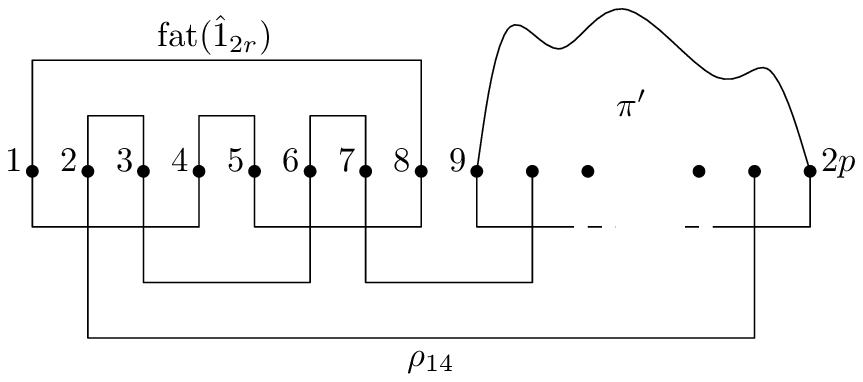} \\
\includegraphics{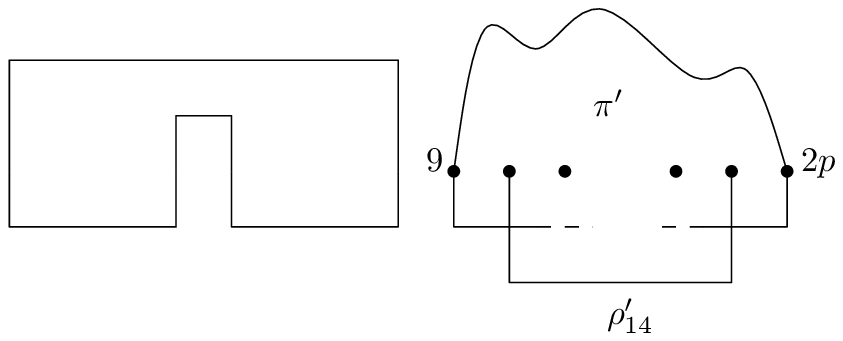}
\caption{The partition $\mathrm{fat}(\pi) \vee \rho_{14}$ (top line) has one more block than $\mathrm{fat}(\pi') \vee \rho'_{14}$ (bottom) when $k$ is even.}
\label{fig:fat-even}
\end{figure}
\begin{figure}
\includegraphics{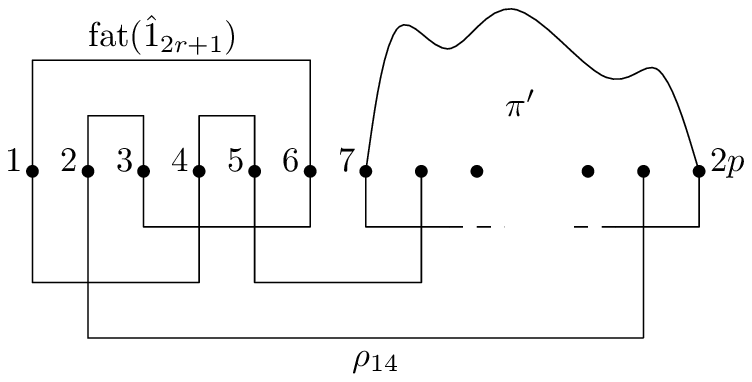} \qquad
\includegraphics{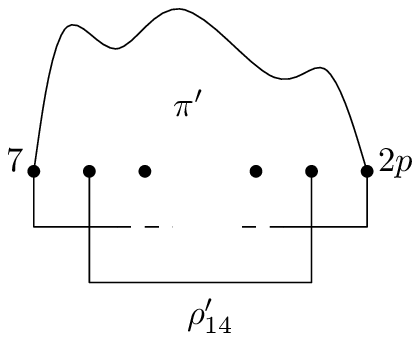}
\caption{The partition $\mathrm{fat}(\pi) \vee \rho_{14}$ (left) has the same number of blocks as $\mathrm{fat}(\pi') \vee \rho'_{14}$ (right) when $k$ is odd.}
\label{fig:fat-odd}
\end{figure}
\end{proof}

\begin{theorem}\label{thm:moment-formula}
For any $p\geq 1$ we have
$$\lim_{d\to\infty}(\mathbb E\circ \mathrm{tr})(m{W}^{\Gamma})^p
=\sum_{\pi \in NC(p)}m^{\#\pi}n^{\mathrm{e}(\pi)}$$
where $\#(\cdot)$ and $\mathrm{e}(\cdot)$ denote the number of blocks and the number of blocks of even size statistics. 
\end{theorem}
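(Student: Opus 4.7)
The plan is to expand $(mW^\Gamma)^p$ via the Gaussian definition of $W$ and apply the Wick formula. From $W=(dm)^{-1}GG^*$ one reads off
$$(mW^\Gamma)_{ia,jb}=\frac{1}{d}\sum_{\alpha=1}^{dm}G_{ib,\alpha}\overline{G_{ja,\alpha}},$$
so that, writing the trace as a cyclic sum with indices taken modulo $p$,
$$(\mathbb E\circ\mathrm{tr})(mW^\Gamma)^p=\frac{1}{n\,d^{p+1}}\sum_{i,a,\alpha}\mathbb E\!\left[\prod_{s=1}^{p}G_{i_s a_{s+1},\alpha_s}\overline{G_{i_{s+1}a_s,\alpha_s}}\right].$$
Wick's formula for complex Gaussians turns this expectation into a sum over permutations $\sigma\in S_p$ encoding the pairings between the $p$ copies of $G$ and the $p$ copies of $\overline G$, with Kronecker deltas matching each of the four index groups.

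Letting $\gamma\in S_p$ denote the forward cyclic shift $s\mapsto s+1$, the three independent index sums decouple cleanly. The $\alpha$-variables are constant on cycles of $\sigma$, giving $(dm)^{\#\sigma}$; the $i$-variables satisfy $i_s=i_{\sigma(s)+1}$, hence are constant on cycles of $\gamma\sigma$, giving $d^{\#(\gamma\sigma)}$; and the $a$-variables satisfy $a_{s+1}=a_{\sigma(s)}$, hence are constant on cycles of $\sigma\gamma^{-1}$, giving $n^{\#(\sigma\gamma^{-1})}$. Collecting all the $d$'s yields an exponent $\#\sigma+\#(\gamma\sigma)-p-1$, which by the triangle inequality for the length function on $S_p$ is nonpositive, with equality iff $\sigma$ is a geodesic from $e$ to $\gamma^{-1}$, and hence is in bijection with a noncrossing partition.

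Passing to $d\to\infty$ therefore kills every non-geodesic term. After the change of variables $\pi:=\sigma^{-1}$ (under which $\#(\sigma\gamma^{-1})=\#(\pi\gamma)$ and the geodesic condition becomes the standard one $|\pi|+|\pi^{-1}\gamma|=p-1$ used in the excerpt), the sum collapses to
$$\lim_{d\to\infty}(\mathbb E\circ\mathrm{tr})(mW^\Gamma)^p=\sum_{\pi\in NC(p)}m^{\#\pi}n^{\#(\pi\gamma)-1}.$$
The final step is to invoke Lemma \ref{lem:even-blocks}, which replaces $\#(\pi\gamma)-1$ by $\mathrm{e}(\pi)$ and produces the claimed formula.

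The delicate point throughout is the asymmetric appearance of $\gamma$ and $\gamma^{-1}$ in the three factors: it is precisely the partial transposition that swaps the roles of $a_s$ and $a_{s+1}$ in the entries of $W^\Gamma$, shifting the $a$-constraints relative to the $\alpha$-constraints and thereby producing an extra $\gamma$ in the $n$-exponent. This twist is what ultimately forces the even-block statistic $\mathrm{e}(\pi)$ to appear, in place of the plain $\#\pi$ statistic that would govern untransposed Wishart moments. Once this bookkeeping is set up correctly, everything reduces to the standard ``only noncrossing survives'' principle combined with Lemma \ref{lem:even-blocks}.
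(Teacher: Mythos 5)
Your proof is correct and follows essentially the same route as the paper: expand via the Gaussian definition, apply the complex Wick formula to get a sum over $S_p$, track the $d$, $m$, $n$ exponents via cycle counts, use the triangle inequality for the Cayley length to isolate the geodesic (noncrossing) terms in the $d\to\infty$ limit, and finish by Lemma~\ref{lem:even-blocks}. The only differences are cosmetic bookkeeping choices (you merge the paper's $j,b$ indices into a single column index $\alpha$, and you work with the forward shift and then substitute $\pi=\sigma^{-1}$, whereas the paper takes $\gamma=(1\,2\,\ldots\,p)^{-1}$ directly); the exponents $\#\sigma$, $\#(\gamma\sigma)$, $\#(\sigma\gamma^{-1})$ you derive match the paper's $\#\pi$, $\#(\gamma^{-1}\pi)$, $\#(\pi\gamma)$ after the relabeling.
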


\begin{proof}
The matrix elements of the partial transpose matrix are given by:
$${W}^{\Gamma}_{ia,jb}=W_{ib,ja}=(dm)^{-1}\sum_{k=1}^d\sum_{c=1}^mG_{ib,kc}\bar{G}_{ja,kc}.$$

This gives:
\begin{align*}
\mathrm{tr}[({W}^{\Gamma})^p]
&=(dn)^{-1}(dm)^{-p}\sum_{i_1,\ldots,i_p=1}^d\sum_{a_1,\ldots,a_p=1}^n\prod_{s=1}^p{W}^{\Gamma}_{i_sa_s,i_{s+1}a_{s+1}}\\
&=(dn)^{-1}(dm)^{-p}\sum_{i_1,\ldots,i_p=1}^d\sum_{a_1,\ldots,a_p=1}^n\prod_{s=1}^p W_{i_sa_{s+1},i_{s+1}a_s} \\
&=(dn)^{-1}(dm)^{-p}\sum_{i_1,\ldots,i_p=1}^d\sum_{a_1,\ldots,a_p=1}^n\prod_{s=1}^p \sum_{j_1,\ldots,j_p=1}^d\sum_{b_1,\ldots,b_p=1}^mG_{i_sa_{s+1},j_sb_s}\bar{G}_{i_{s+1}a_s,j_sb_s}.
\end{align*}

After interchanging the product with the last two sums, the average of the general term can be computed by the Wick rule, namely:
$$\mathbb E\left(\prod_{s=1}^pG_{i_sa_{s+1},j_sb_s}\bar{G}_{i_{s+1}a_s,j_sb_s}\right)
=\sum_{\pi\in S_p}\prod_{s=1}^p\delta_{i_s,i_{\pi(s)+1}}\delta_{a_{s+1},a_{\pi(s)}}\delta_{j_s,j_{\pi(s)}}\delta_{b_s,b_{\pi(s)}}.$$

Let $\gamma\in S_p$ be the full cycle $\gamma=(1 \, 2 \, \ldots \, p)^{-1}$. The general factor in the above product is 1 if and only if the following four conditions are simultaneously satisfied:
$$\gamma^{-1}\pi\leq \ker i,\quad\pi\gamma \leq \ker a,\quad\pi \leq \ker j,\quad\pi \leq \ker b,$$
where the kernel of a function $f:\{1, \ldots, p\} \to \mathbb R$ is the partition $\ker f$ where $i$ and $j$ are in the same block iff $f(i) = f(j)$. Counting the number of free parameters in the above equation, we obtain:
\begin{align*}
(\mathbb E\circ \mathrm{tr})[({W}^{\Gamma})^p]
&=(dn)^{-1}(dm)^{-p}\sum_{\pi\in S_p}d^{\#\pi+\#(\gamma^{-1}\pi)}m^{\#\pi}n^{\#(\pi\gamma)}\\
&=\sum_{\pi\in S_p}d^{\#\pi+\#(\gamma^{-1}\pi)-p-1}m^{\#\pi-p}n^{\#(\pi\gamma)-1}.
\end{align*}

The exponent of $d$ in the last expression on the right is
\begin{align*}
N(\pi)
&=\#\pi+\#(\gamma^{-1}\pi)-p-1\\
&= p-1-(|\pi|+|\gamma^{-1}\pi|)\\
&= p-1-(|\pi|+|\pi^{-1}\gamma|)
\end{align*}
and, as explained in the beginning of this section, this quantity is known to be $\leq 0$, with equality iff $\pi$ is geodesic, hence associated to a noncrossing partition. We get:
$$(\mathbb E\circ \mathrm{tr})[({W}^{\Gamma})^p]=(1+O(d^{-1}))m^{-p}n^{-1}\sum_{\pi\in NC(p)}m^{\#\pi} n^{\#(\pi\gamma)}.$$
By Lemma \ref{lem:even-blocks}, we have $\#(\pi\gamma)=\mathrm{e}(\pi)+1$, which gives the result.
\end{proof}

\section{Moment generating function}\label{sec:mgf}

In this section we find an equation for the moment generating function of the asymptotic law of $m{W}^{\Gamma}$. This moment generating function is defined by:
$$F(z)=\lim_{d\to\infty}(\mathbb E\circ \mathrm{tr})\left(\frac{1}{1-zm{W}^{\Gamma}}\right).$$

Equivalently, $F(z)=\sum_{p=0}^\infty M_pz^p$, where $M_p$ is the moment computed in Theorem \ref{thm:moment-formula}. Note that one can crudely upper bound the moment $M_p$ by $4^p(mn)^p$ so that the moment generating function above is defined at least for $z$ small engouh, $|z| <(4mn)^{-1}$.

\begin{theorem}\label{thm:mgf}
The moment generating function of $m{W}^{\Gamma}$ satisfies the equation:
$$(F-1)(1-z^2F^2)=mzF(1+nzF)$$
\end{theorem}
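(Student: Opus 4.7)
The plan is to derive the functional equation directly from the moment formula of Theorem \ref{thm:moment-formula} via the standard recursive decomposition of noncrossing partitions by the block containing the element $1$. Since $M_p = \sum_{\pi \in NC(p)} m^{\#\pi} n^{\mathrm{e}(\pi)}$ and $F(z) = \sum_{p \geq 0} M_p z^p$ converges in a neighborhood of the origin, I will work as a formal power series identity and invoke the convergence estimate from the paragraph preceding the theorem only to justify the ensuing manipulations.

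The key combinatorial step is the following. For $p \geq 1$ and $\pi \in NC(p)$, let $B$ be the block of $\pi$ containing $1$, and write $B = \{1 = i_1 < i_2 < \cdots < i_k\}$. Noncrossingness forces the remaining indices to split into the $k$ gaps $(i_1,i_2), (i_2,i_3), \dots, (i_{k-1},i_k), (i_k, p+1)$, each of which is filled independently with a noncrossing partition of its interval. The block $B$ itself contributes a factor $m$ from $m^{\#\pi}$ and a factor $n$ to $n^{\mathrm{e}(\pi)}$ exactly when $k$ is even. Grouping the $k$ elements of $B$ (contributing $z^k$) with the $k$ independent sub-partitions (each contributing $F(z)$), I obtain
\begin{equation*}
F(z) \;=\; 1 \;+\; \sum_{k \geq 1} m\, n^{\mathbf 1[k\text{ even}]} (zF(z))^k.
\end{equation*}

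Writing $u = zF(z)$ and splitting the sum on the parity of $k$, the two resulting geometric series give
\begin{equation*}
F - 1 \;=\; m\!\left(\frac{u}{1-u^2} + \frac{n\,u^2}{1-u^2}\right) \;=\; \frac{m\,u(1+nu)}{1-u^2}.
\end{equation*}
Clearing the denominator $1 - u^2 = 1 - z^2 F^2$ and substituting $u = zF$ in the numerator yields exactly $(F-1)(1-z^2F^2) = mzF(1+nzF)$.

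The only point requiring real care is the recursive decomposition itself, i.e., that the blocks filling the $k$ gaps are arbitrary and independent noncrossing partitions and that the statistics $\#(\cdot)$ and $\mathrm{e}(\cdot)$ are both additive over the block $B$ and the $k$ sub-partitions. Both are immediate from the definitions, so I expect no real obstacle; the remainder is routine generating-function bookkeeping.
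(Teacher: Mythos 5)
Your proof is correct and takes essentially the same approach as the paper: decompose a noncrossing partition by the block containing $1$, with the $k$ gaps each contributing an independent factor of $F$, then split a geometric series in $u=zF$ by the parity of $k$. The paper routes the same idea through an explicit triple-index recurrence for $N(p,b,e)$ before summing, but the underlying combinatorics and the resulting equation are identical.
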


\begin{proof}
We use Theorem \ref{thm:moment-formula}. If we denote by $N(p,b,e)$ the number of partitions in $NC(p)$ having $b$ blocks and $e$ even blocks, we have:
$$F=1+\sum_{p=1}^\infty\sum_{\pi\in NC(p)} z^pm^{\#\pi}n^{\mathrm{e}(\pi)}=1+\sum_{p=1}^\infty\sum_{b=0}^\infty\sum_{e=0}^\infty z^pm^bn^eN(p,b,e).$$

Let us try to find a recurrence formula for the numbers $N(p,b,e)$. If we look at the block containing $1$, this block must have $r\geq 0$ other legs, and we get:
\begin{align*}
N(p,b,e) 
&=\sum_{r\in 2\mathbb N}\sum_{p=\Sigma p_i+r+1}\sum_{b=\Sigma b_i+1}\sum_{e=\Sigma e_i}N(p_1,b_1,e_1)\ldots N(p_{r+1},b_{r+1},e_{r+1})\\
&+\sum_{r\in 2\mathbb N+1}\sum_{p=\Sigma p_i+r+1}\sum_{b=\Sigma b_i+1}\sum_{e=\Sigma e_i+1}N(p_1,b_1,e_1)\ldots N(p_{r+1},b_{r+1},e_{r+1}).
\end{align*}

Here $p_1,\ldots,p_{r+1}$ are the number of points between the legs of the block containing 1, so that we have $p=(p_1+\ldots+p_{r+1})+r+1$, and the whole sum is split over two cases ($r$ even or odd), because the parity of $r$ affects the number of even blocks of our partition.

Now by multiplying everything by a $z^pm^bn^e$ factor, and by carefully distributing the various powers of $z,m,b$ on the right, we obtain the following formula:
\begin{align*}
z^pm^bn^eN(p,b,e)
&=m\sum_{r\in 2\mathbb N}z^{r+1}\sum_{p=\Sigma p_i+r+1}\sum_{b=\Sigma b_i+1}\sum_{e=\Sigma e_i}\prod_{i=1}^{r+1}z^{p_i}m^{b_i}n^{e_i}N(p_i,b_i,e_i)\\
&+mn\sum_{r\in 2\mathbb N+1}z^{r+1}\sum_{p=\Sigma p_i+r+1}\sum_{b=\Sigma b_i+1}\sum_{e=\Sigma e_i+1}\prod_{i=1}^{r+1}z^{p_i}m^{b_i}n^{e_i}N(p_i,b_i,e_i).
\end{align*}

Let us sum now all these equalities, over all $p\geq 1$ and over all $b,e\geq 0$. According to the definition of $F$, at left we obtain $F-1$. As for the two sums appearing on the right (i.e., at right of the two $z^{r+1}$ factors), when summing them over all $p\geq 1$ and over all $b,e\geq 0$, we obtain in both cases $F^{r+1}$. So, we have the following formula:
\begin{align*}
F-1
&=m\sum_{r\in 2\mathbb N}(zF)^{r+1}+mn\sum_{r\in 2\mathbb N+1}(zF)^{r+1}\\
&=m\,\frac{zF}{1-z^2F^2}+mn\,\frac{z^2F^2}{1-z^2F^2}\\
&=mzF\,\frac{1+nzF}{1-z^2F^2}.
\end{align*}

This gives the formula in the statement.
\end{proof}

\section{Limiting distribution and free Poisson laws}\label{sec:main}

In this section we state and prove our main result. Let us first recall that for a complex Wishart matrix $W$ of parameters $(dn,dm)$, the eigenvalue distribution of $tW$, with $t=m/n$, converges in moments with $d\to\infty$ to the Marchenko-Pastur law \cite{mpa}, given by
$$\pi_t=\max (1-t,0)\delta_0+\frac{\sqrt{4t-(x-1-t)^2}}{2\pi x}1_{[a,b]}(x)\,dx,$$
where $a=(\sqrt t - 1)^2$ and $b=(\sqrt t +1)^2$.

In order to deal with the block-transposed Wishart matrices ${W}^{\Gamma}$, we will need a free probability point of view on $\pi_t$. So, let us recall from \cite{vdn} that a noncommutative probability space is a pair $(A,\varphi)$, where $A$ is a $C^*$-algebra, and $\varphi:A\to\mathbb C$ is a positive unital linear form. The law of a self-adjoint element $a\in A$ is the probability measure on the spectrum of $a$ (which is a compact subset of $\mathbb R$) having as moments the numbers $\varphi(a^p)$.

Two subalgebras $B,C\in A$ are called free if $\varphi(\ldots b_ic_ib_{i+1}c_{i+1}\ldots)=0$ whenever $b_i\in B$ and $c_i\in C$ satisfy $\varphi(b_i)=\varphi(c_i)=0$. Two elements $b,c\in A$  are called free whenever the algebras $B=<b>$ and $C=<c>$ that they generate are free. Finally, the free convolution operation $\boxplus$ is defined as follows: if $\mu,\nu$ are compactly supported probability measures on $\mathbb R$, then $\mu\boxplus\nu$ is the law of $b+c$, where $b,c$ are free, having laws $\mu,\nu$, see \cite{vdn}.

One of the remarkable results of the theory is that when performing a ``free Poisson limit'', we obtain precisely the Marchenko-Pastur law. That is, we have:
$$\pi_t=\lim_{k\to\infty}\left[\left(1-\frac{t}{k}\right)\delta_0+\frac{t}{k}\delta_1\right]^{\boxplus k}.$$

For this reason, the Marchenko-Pastur law is also called ``free Poisson law'' \cite{nsp}.

In what follows we will find a free probability interpretation of our main result so far, namely Theorem \ref{thm:mgf}. We will prove that $m{W}^{\Gamma}$ has the same law as $a-b$, where $a,b$ are free variables following the laws $\pi_s,\pi_t$, with $s=m(n+1)/2$ and $t=m(n-1)/2$. 

\begin{theorem}\label{thm:main}
With $d\to\infty$, the empirical spectral distribution of $m{W}^{\Gamma}$ converges in moments to a probability measure $\mu_{m,n}$, which is the free difference of free Poisson distributions of parameters $m(n\pm 1)/2$.
\end{theorem}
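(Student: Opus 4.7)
The plan is to identify the limit measure through its $R$-transform and check that the functional equation it produces for the moment generating function coincides with the equation in Theorem \ref{thm:mgf}. Uniqueness of the power-series solution of that equation (since the recursion it encodes determines all moments inductively) together with the moment-determinacy of the compactly supported free difference $\pi_s\boxminus\pi_t$ will then close the argument.

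First I would recall that the free Poisson law $\pi_t$ has $R$-transform $R_{\pi_t}(z)=t/(1-z)$, and that for any compactly supported $\mu$ one has $R_{-\mu}(z)=-R_\mu(-z)$. Taking $a\sim\pi_s$ and $b\sim\pi_t$ free, with $s=m(n+1)/2$ and $t=m(n-1)/2$, one computes
\begin{equation*}
R_{a-b}(z)=\frac{s}{1-z}-\frac{t}{1+z}=\frac{(s-t)+(s+t)z}{1-z^2}=\frac{m(1+nz)}{1-z^2},
\end{equation*}
using the crucial numerology $s-t=m$ and $s+t=mn$. This is the computation on which the whole identification hinges.

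Next I would translate this into an equation for $F$. Writing $G$ for the Cauchy transform of $a-b$ and using $G(1/z)=zF(z)$ (immediate from $F(z)=\mathbb E[(1-zX)^{-1}]$), the defining relation $K(G(w))=w$ with $K(y)=R(y)+1/y$ becomes, setting $y=zF$ and $w=1/z$,
\begin{equation*}
\frac{m(1+ny)}{1-y^2}+\frac{1}{y}=\frac{1}{z}.
\end{equation*}
Multiplying by $y=zF$ and simplifying yields exactly
\begin{equation*}
(F-1)(1-z^2F^2)=mzF(1+nzF),
\end{equation*}
which is the equation established in Theorem \ref{thm:mgf}.

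Finally I would conclude as follows. The functional equation, viewed as a recursion on the coefficients of $F(z)=\sum_p M_pz^p$ with $M_0=1$, determines each $M_p$ uniquely, so the moments of $mW^\Gamma$ (in the $d\to\infty$ limit) and those of $a-b$ agree. Because $a$ and $b$ are bounded self-adjoint operators, $a-b$ has compactly supported spectral distribution and is therefore determined by its moments; this gives convergence in moments of $mW^\Gamma$ to $\mu_{m,n}:=\pi_s\boxminus\pi_t$. I expect no serious obstacle: the only subtle point is the algebraic manipulation that turns the $R$-transform identity into the functional equation of Theorem \ref{thm:mgf}, and the reason it works so cleanly is precisely the choice $s\pm t=m,\,mn$, which is what dictates the parameters $m(n\pm1)/2$.
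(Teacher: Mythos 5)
Your proposal is correct and takes essentially the same route as the paper: both arguments hinge on the $R$-transform computation $R_{a-b}(z)=s/(1-z)-t/(1+z)=m(1+nz)/(1-z^2)$ with $s=m(n+1)/2$, $t=m(n-1)/2$, and both pass between the $R$-transform identity and the functional equation $(F-1)(1-z^2F^2)=mzF(1+nzF)$ of Theorem \ref{thm:mgf} via the standard $F\leftrightarrow G\leftrightarrow K\leftrightarrow R$ substitutions. The only difference is directional: the paper starts from the $F$-equation and formally inverts it to extract $R$, whereas you start from the candidate measure $\pi_s\boxminus\pi_t$, compute its $R$-transform, and verify it reproduces the same $F$-equation; your closing remarks on the recursive uniqueness of the power-series solution and on moment-determinacy of the compactly supported limit make the logical conclusion slightly more explicit than the paper's, but the mathematical content is identical.
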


\begin{proof}
We use Voiculescu's $R$-transform, which is defined as follows \cite{vdn}. Let $\mu$ be a compactly supported probability measure on $\mathbb R$, having moment generating function $F$. The Cauchy transform of $\mu$ is then $G(\xi)=\xi^{-1}F(\xi^{-1})$. We denote by $K$ the formal inverse of $G$, given by $G(K(z))=K(G(z))=z$, and then we write $K(z)=R(z)+z^{-1}$. The function $R=R_\mu$, called $R$-transform of $\mu$, has the remarkable property $R_{\mu\boxplus\nu}=R_\mu+R_\nu$. In other words, $R$ is the free analogue of the logarithm of the Fourier transform.

Consider now the equation of $F$, found in Theorem \ref{thm:mgf}:
$$(F-1)(1-z^2F^2)=mzF(1+nzF).$$

With $z\to\xi^{-1}$ and $F\to\xi G$ (so that $zF\to G$) we obtain:
$$(\xi G-1)(1-G^2)=mG(1+nG).$$

Now with $\xi \to K$ and $G\to z$ we obtain:
$$(zK-1)(1-z^2)=mz(1+nz).$$

Finally, with $K\to R+z^{-1}$ we obtain:
$$zR(1-z^2)=mz(1+nz).$$

Thus the $R$-transform of the asymptotic law of $m{W}^{\Gamma}$ is given by:
$$R=m\,\frac{1+nz}{1-z^2}=\frac{m}{2}\left(\frac{n+1}{1-z}-\frac{n-1}{1+z}\right).$$

Let now $a,b$ be free variables, following free Poisson laws of parameters $s,t$. We have $R_a=s/(1-z)$ and $R_b=t/(1-z)$, and by using the general dilation formula $R_{qb}(z)=qR_b(qz)$  at $q=-1$ (see \cite{nsp}) we deduce that we have $R_{-b}=-t/(1+z)$. So, we have:
$$R_{a-b}=\frac{s}{1-z}-\frac{t}{1+z}.$$

Now since with $s=m(n+1)/2$ and $t=m(n-1)/2$ we obtain the above formula of $R$, we are done.
\end{proof}

\section{Examples of limiting measures}\label{sec:examples}

In the reminder of this paper we study the limiting measures found in Theorem \ref{thm:main}. It is convenient to enlarge our study to all the possible values of $m,n$, as follows.

\begin{definition}\label{41}
To any real numbers $m\geq 0$ and $n\geq 1$ we associate the measure $\mu_{m,n}$ which is the free difference of free Poisson distributions of parameters $m(n\pm 1)/2$.
\end{definition}

As a first remark, the various formulae found in this paper (e.g. moment formula in Theorem \ref{thm:moment-formula}, and various equations for $F,G,K,R$ in the proof of Theorem \ref{thm:main}) are of valid in this more general setting. Let us collect some useful formulae here.

\begin{proposition}\label{42}
The measure $\mu_{m,n}$ has the following properties:
\begin{enumerate}
\item It has mean $m$ and variance $mn$.

\item Its Cauchy transform satisfies $(\xi G-1)(1-G^2)=mG(1+nG)$.

\item Its $R$-transform is given by $R(z)=m(1+nz)/(1-z^2)$.
\end{enumerate}
\end{proposition}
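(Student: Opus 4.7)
The cleanest route is to prove item (3) first, then derive (2) from (3) by inverting the transforms, and finally read off (1) from the free cumulants encoded by $R$.

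For (3), I would compute $R_{\mu_{m,n}}$ directly from the definition. The $R$-transform of a free Poisson law $\pi_u$ is $R_{\pi_u}(z)=u/(1-z)$, and by the dilation identity $R_{qb}(z)=qR_b(qz)$ specialized to $q=-1$ (which was already invoked in the proof of Theorem \ref{thm:main}), if $b\sim\pi_t$ then $-b$ has $R$-transform $-t/(1+z)$. Since the $R$-transform linearizes free convolution, with $s=m(n+1)/2$, $t=m(n-1)/2$, and $a,b$ free Poisson of parameters $s,t$, I get
$$R_{\mu_{m,n}}(z)=R_a(z)+R_{-b}(z)=\frac{s}{1-z}-\frac{t}{1+z}=\frac{(s-t)+(s+t)z}{1-z^2}=\frac{m(1+nz)}{1-z^2},$$
using $s-t=m$ and $s+t=mn$. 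The only (mild) sanity check needed here is that the identities extend from the integer-parameter case implicitly treated in Section \ref{sec:main} to the general $m\ge 0$, $n\ge 1$ regime of Definition \ref{41}; but free Poisson laws and the dilation formula are defined for arbitrary nonnegative parameters, so nothing breaks.

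For (2), I would simply reverse the chain of substitutions in the proof of Theorem \ref{thm:main}. From $K(z)=R(z)+z^{-1}$ and $K(G(\xi))=\xi$ one gets $\xi G(\xi)-1=G(\xi)R(G(\xi))$. Plugging in the expression from (3) yields
$$\xi G-1=G\cdot\frac{m(1+nG)}{1-G^2},$$
which rearranges to $(\xi G-1)(1-G^2)=mG(1+nG)$. No serious obstacle arises; one just has to note that $R$ is analytic at $0$ so the inversion is valid in a neighborhood of infinity for $G$.

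For (1), I would read the mean and variance off as the first two free cumulants, using $R(z)=\sum_{k\ge 1}\kappa_k z^{k-1}$. Then $\kappa_1=R(0)=m$ gives the mean, and a short computation of $R'(0)$ from $R(z)=m(1+nz)/(1-z^2)$ gives $\kappa_2=mn$, hence variance $mn$ (the mean and variance coincide with the first two free cumulants because $\kappa_1$ equals the expectation and $\kappa_2$ equals the variance for any distribution). The only step to be a bit careful about is the differentiation, but it is a routine quotient-rule computation. I do not foresee any real obstacle in this proposition: the content is essentially packaging identities that have already been established in the proof of Theorem \ref{thm:main}, re-interpreted in the purely free-probabilistic framework of Definition \ref{41}.
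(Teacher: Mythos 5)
Your proof is correct and follows essentially the same route as the paper: the paper's proof simply says that items (2) and (3) follow from the proof of Theorem~\ref{thm:main} ``by proceeding backwards,'' which is precisely what you spell out. The only cosmetic difference is in item~(1), where you extract the mean and variance as the first two free cumulants $\kappa_1=R(0)$ and $\kappa_2=R'(0)$, while the paper reads them off directly from the additivity of mean and variance for the free Poisson components $\pi_{s}$, $D_{-1}(\pi_{t})$ with $s-t=m$, $s+t=mn$; the two computations are of course equivalent.
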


\begin{proof}
The first statement follows from the fact that both the average and the variance of a free Poisson law of parameter $t$ are $t$. The last two items follow from the proof of Theorem \ref{thm:main} above, by proceeding backwards.
\end{proof}

Regarding now the main particular cases of measures of type $\mu_{m,n}$, corresponding to some previously known random matrix computations, the situation is as follows.

\begin{theorem}\label{43}
The measures $\mu_{m,n}$ are as follows:
\begin{enumerate}
\item At $n=1$ we have the Marchenko-Pastur laws: $\mu_{m,1}=\pi_m$.

\item With $m=\alpha n\to\infty$ we obtain Aubrun's shifted semicircles \cite{aub}.

\item With $m=t/n\to 0$ we obtain a modified free Bessel distribution $\tilde \pi_{2, t}$.
\end{enumerate}
\end{theorem}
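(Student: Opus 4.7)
The plan is to work entirely at the level of the $R$-transform, using the formula
\[
R(z)=m\,\frac{1+nz}{1-z^{2}}=\frac{m}{2}\left(\frac{n+1}{1-z}-\frac{n-1}{1+z}\right)
\]
recorded in Proposition \ref{42}, together with the dilation rule $R_{cX}(z)=cR_{X}(cz)$ and the fact that uniform convergence of $R$-transforms on a neighbourhood of $0$ forces convergence in moments of compactly supported laws.

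For (1) I would simply substitute $n=1$: the second term disappears, and what remains is $R(z)=m/(1-z)$, which is the $R$-transform of the Marchenko--Pastur law $\pi_{m}$. Equivalently, the parameter $m(n-1)/2$ vanishes at $n=1$, so one of the two free Poisson components is $\delta_{0}$ and the free difference collapses to $\pi_{m}$.

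For (2), with $m=\alpha n$, the measure $\mu_{\alpha n,n}$ has mean $\alpha n$ and variance $\alpha n^{2}$, so the natural rescaling is to divide the underlying variable by $n$. A short calculation gives
\[
\tfrac{1}{n}R(z/n)=\alpha\,\frac{1+z}{1-z^{2}/n^{2}}\;\xrightarrow[n\to\infty]{}\;\alpha+\alpha z,
\]
and $c+\sigma^{2}z$ is the $R$-transform of the semicircle with mean $c$ and variance $\sigma^{2}$, yielding the shifted semicircle of \cite{aub}. For (3), substituting $m=t/n$ and letting $n\to\infty$ is even more direct:
\[
R(z)=\frac{t/n+tz}{1-z^{2}}\;\xrightarrow[n\to\infty]{}\;\frac{tz}{1-z^{2}}=t\sum_{k\geq 0}z^{2k+1},
\]
so the free cumulants of the limit are $\kappa_{2k+1}=t$ and $\kappa_{2k}=0$. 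In parallel, both Poisson parameters $m(n\pm 1)/2$ tend to the common value $t/2$, so the limit is a free difference of two free Poissons of equal parameter $t/2$, which is the modified free Bessel law $\tilde\pi_{2,t}$ of \cite{bb+}.

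The computations themselves are elementary once the $R$-transform is in hand; the only genuine care-points are choosing the correct rescaling factor in part (2) (it is $1/n$ rather than $1/\sqrt{mn}$ or $1/m$), and in part (3) verifying that the formula $R(z)=tz/(1-z^{2})$ matches the normalization of $\tilde\pi_{2,t}$ adopted in \cite{bb+} — a bibliographic matching rather than a substantive obstacle.
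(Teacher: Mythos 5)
Your argument is correct, and it treats all three items in a uniform way through the $R$-transform, whereas the paper mixes methods: items (1) and (3) are read off directly from Definition \ref{41} (the Poisson parameters degenerate to $(m,0)$ at $n=1$ and both tend to $t/2$ when $m=t/n\to 0$), while item (2) is proved via the cubic Cauchy-transform equation of Proposition \ref{42} together with a Stieltjes inversion to exhibit the semicircle density explicitly. Your route for (2) is shorter and bypasses the inversion step, at the cost of not producing the density; the paper's route is more computational but ends with the explicit expression for the semicircle law.

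One small inaccuracy worth flagging: your ``care-point'' remark at the end, that the correct rescaling in (2) is $1/n$ rather than $1/m$ or $1/\sqrt{mn}$, is misleading. In fact all three rescalings give semicircle limits; they only differ by a fixed dilation constant (since $m=\alpha n$ the ratios are bounded). The paper's choice is $D_{m^{-1}}$, giving $SC(1,\beta)$ with $\beta=\alpha^{-1}$, and this is the choice that transliterates Aubrun's statement directly, because $\mu_{m,n}$ is by construction the law of $mW^\Gamma$, so $D_{m^{-1}}(\mu_{m,n})$ is exactly the limit law of $W^\Gamma$ itself. Your rescaling $D_{n^{-1}}$ produces $SC(\alpha,\alpha)$, which is indeed a shifted semicircle, but to claim identity with Aubrun's law one still has to observe that $D_{\alpha^{-1}}\bigl(SC(\alpha,\alpha)\bigr)=SC(1,\beta)$. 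So the dismissal of $1/m$ is the opposite of what the source normalization suggests; this does not invalidate the proof, but the remark should be corrected.
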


\begin{proof}
All the results are clear either from Definition \ref{41}, or from Proposition \ref{42}:

(1) This follows from Definition \ref{41}, because at $n=1$ we have $m(n\pm 1)/2=m,0$.

(2) The precise statement here is that for any $\beta=\alpha^{-1}>0$ we have the formula
$$\lim_{m\to\infty}D_{m^{-1}}(\mu_{m,\beta m})=SC(1,\beta)$$
where $D$ is the dilation operation (if $\mu$ is the law of a random variable $a$ then $D_q(\mu)$ is by definition the law of the random variable $qa$) and where $SC(1,\beta)$ is the semicircle law centered at $1$, having support $[1-2\sqrt{\beta},1+2\sqrt{\beta}]$.

Indeed, let $\Gamma=\Gamma(\xi)$ be the Cauchy transform of $\nu=D_{m^{-1}}(\mu_{m,n})$. The Cauchy transform of $\mu_{m,n}=D_m(\nu)$ is then given by $G(m\xi)=m^{-1}\Gamma$, and by making the replacements $\xi\to m\xi$ and $G\to m^{-1}\Gamma$ in the equation of $G$ in Proposition \ref{42}, we obtain: 
$$(\xi\Gamma-1)(1-m^{-2}\Gamma^2)=\Gamma(1+nm^{-1}\Gamma).$$

In the limit $n=\beta m\to\infty$, as indicated above, this equation becomes:
\begin{align*}
\xi\Gamma-1=\Gamma(1+\beta\Gamma)
&\implies\beta\Gamma^2+(1-\xi)\Gamma+1=0\\
&\implies\Gamma=\frac{\xi-1\pm\sqrt{(1-\xi)^2-4\beta}}{2\beta}.
\end{align*}

By applying now the Stieltjes inversion formula, we get the following density:
$$f(x)=\frac{\sqrt{4\beta-(1-x)^2}}{2\beta\pi}$$

But this is exactly the density of the semicircle law $SC(1,\beta)$, and we are done.

(3) This follows directly from Definition \ref{41}, because with $m=t/n\to 0$ we have $m(n\pm 1)/2=t/2,t/2$, and these parameters are the same as those for the modified free Bessel law $\tilde \pi_{2, t}$, see \cite{bb+}, Definition 7.2.
\end{proof}

Observe that the formula (1) above is in tune with the Marchenko-Pastur theorem, as formulated in section \ref{sec:main}: indeed, at $n=1$ the partial transposition operation is trivial, so Theorem \ref{thm:main} computes the asymptotic law of the rescaled Wishart matrix $mW$.

Also, (2) above agrees of course with Aubrun's computation in \cite{aub}, with Theorem \ref{thm:main} generalizing Aubrun's result. This result can be seen as well to follow from the following heuristic argument using the free central limit theorem. Using the free Poisson limit theorem, one can write (we denote a Bernoulli random variable by $b(p, x) = (1-p) \delta_0 + p \delta_x$)
\begin{align*}
D_{m^{-1}}(\mu_{m,n}) &= \left[\lim_{k \to \infty}b\left(\frac{m(n-1)}{2k}, -\frac{1}{m}\right)^{\boxplus k}\right] \boxplus \left[\lim_{k \to \infty}b\left(\frac{m(n+1)}{2k}, \frac{1}{m}\right)^{\boxplus k}\right] \\
&=\delta_1 \boxplus \lim_{k \to \infty}\left[b\left(\frac{m(n-1)}{2k}, -\frac{1}{m}\right) \boxplus b\left(\frac{m(n-1)}{2k}, -\frac{1}{m}\right) \boxplus \delta_{-1/k}\right]^{\boxplus k}\\
&=\delta_1 \boxplus \lim_{k \to \infty}\left[D_{k^{-1/2}}(\tilde \mu_{m,n,k})\right]^{\boxplus k}
\end{align*}
where
$$\tilde \mu_{m,n,k} = \delta_{-1/\sqrt{k}} \boxplus b\left(\frac{m(n-1)}{2k}, -\frac{\sqrt{k}}{m}\right) \boxplus b\left(\frac{m(n+1)}{2k}, \frac{\sqrt{k}}{m}\right)$$
is a centered random variable of variance $n/m$. In the regime when $n=\beta m$, $k=m^2$ and $m \to \infty$, the above measure converges weakly to a free difference of two identical Bernoulli random variables of parameter $\beta/2$ and the result follows by the free central limit theorem.

As for the formula (3) above, this is not exactly the random matrix result in \cite{bb+}, because the matrices studied there are of quite different nature of those studied here. We intend to come back to this phenomenon in some future work \cite{bne}. 

\section{Properties of the limiting measure}\label{sec:prop} 

In this section we study the atoms, support and density of the measures $\mu_{m,n}$. These questions are of purely probabilistic nature, and our first result here is as follows.

\begin{proposition}\label{51}
The probability measure $\mu_{m,n}$ has at most one atom, at $0$, of mass $\max\{1-mn, 0\}$.
\end{proposition}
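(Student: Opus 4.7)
The plan is to read off the atoms of $\mu_{m,n}$ from the poles of its Cauchy transform $G$. Expanding the relation of Proposition \ref{42}(2) gives the cubic
\begin{equation*}
\xi G^3 + (mn-1) G^2 + (m - \xi) G + 1 = 0,
\end{equation*}
and atoms of $\mu_{m,n}$ correspond to simple poles of $G$ on the real axis, with mass at $x_0$ equal to $\lim_{\xi \to x_0}(\xi - x_0)G(\xi)$.

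First I would show that the only possible pole is at $\xi = 0$: dividing the cubic by $G^3$ and letting $G \to \infty$ while $\xi \to x_0$, the only surviving term is $x_0$, forcing $x_0 = 0$. So $\mu_{m,n}$ carries at most one atom, located at $0$. To compute its mass, set $H(\xi) = \xi G(\xi)$. Multiplying the cubic through by $\xi^2$ rewrites it as
\begin{equation*}
H^3 + (mn-1) H^2 + \xi(m-\xi) H + \xi^2 = 0,
\end{equation*}
which at $\xi = 0$ collapses to $H(0)^2\bigl(H(0) + mn - 1\bigr) = 0$. Since the mass $c := \lim_{\xi \to 0}\xi G(\xi)$ must be nonnegative, the only candidates are $c = 0$ and $c = \max(0, 1-mn)$.

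The principal obstacle is to confirm that when $1 - mn > 0$ the true value is $c = 1 - mn$ rather than $c = 0$, i.e., to select the correct branch of the cubic. I would resolve this by invoking the Bercovici--Voiculescu formula for atoms of free convolutions,
\begin{equation*}
(\nu_1 \boxplus \nu_2)(\{x\}) = \max\!\Bigl(0,\ \sup_{a+b=x}\bigl[\nu_1(\{a\}) + \nu_2(\{b\}) - 1\bigr]\Bigr),
\end{equation*}
applied to $\mu_{m,n} = \pi_s \boxplus \widetilde\pi_t$, with $s = m(n+1)/2$, $t = m(n-1)/2$, and $\widetilde\pi_t$ the reflection of $\pi_t$ through the origin. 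Both $\pi_s$ and $\widetilde\pi_t$ carry their unique atom at $0$, of respective masses $\max(0, 1-s)$ and $\max(0, 1-t)$, which also reconfirms independently that $\mu_{m,n}$ has no atom away from $0$. Since $n \geq 1$ gives $s, t \leq mn$, the regime $mn \leq 1$ forces $s, t \leq 1$ and the formula returns $\max(0, (1-s) + (1-t) - 1) = 1 - mn$; in the regime $mn > 1$ a brief case check on the sizes of $s$ and $t$ makes the bracket always negative, so the atom mass is $0$. In both cases this matches the candidate value $\max(0, 1-mn)$ from the Cauchy-transform analysis, completing the proof.
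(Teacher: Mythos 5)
Your proof is correct, but it takes a detour that ends up subsumed by the paper's own argument. The paper goes straight to the Bercovici--Voiculescu theorem for atoms of free convolutions: writing $\mu_{m,n}=\pi_s\boxplus\widetilde\pi_t$ with $s=m(n+1)/2$, $t=m(n-1)/2$, and using that $\pi_c$ has its unique atom at $0$ of mass $\max\{1-c,0\}$, it reads off both the location and the mass of the atom in one step. You instead begin with the cubic for the Cauchy transform $\xi G^3+(mn-1)G^2+(m-\xi)G+1=0$, deduce from the leading behaviour that any pole sits at $\xi=0$, and from $H(0)^2(H(0)+mn-1)=0$ (with $H=\xi G$) extract the two candidate masses $0$ and $1-mn$. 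That analysis is sound, but, as you note yourself, it cannot by itself select the correct root when $mn<1$; you then fall back on Bercovici--Voiculescu to break the tie. Once that theorem is invoked, however, it already determines both the location (only $0+0=0$ can be an atom) and the exact mass (since $s+t=mn$ and $s,t\le mn$, the three relevant cases give $\max\{1-mn,0\}$), so the Cauchy-transform computation becomes a consistency check rather than a load-bearing step. The outcome is the same result obtained by the same essential tool; the paper's version is simply more economical, while yours has the small pedagogical virtue of exhibiting the pole of $G$ explicitly and confirming the answer independently through the cubic.
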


\begin{proof}
The result follows from the general characterization of atoms of a free additive convolution, given by  Bercovici and Voiculescu in \cite{bvo}: $x$ is an atom for the free additive convolution of two measures $\nu_1$ and $\nu_2$ iff $x=x_1+x_2$, where $\nu_1(\{x_1\}) + \nu_2(\{x_2\}) > 1$. And, in addition, if this is the case, then $[\nu_1 \boxplus \nu_2](\{x\}) = \nu_1(\{x_1\}) + \nu_2(\{x_2\}) - 1$. 

In our situation, since the free Poisson distribution of parameter $c$ is known to have  an atom at $0$ of mass $\max\{1-c, 0\}$, we can apply the above-mentioned result when $\nu_1$ is a free Poisson distribution of parameter $m(n+1)/2$ and $\nu_2$ is the image of a free Poisson distribution of parameter $m(n-1)/2$ through the negation map, and we are done.
\end{proof}

Regarding now the support and density of $\mu_{m,n}$, our main tool will be of course the degree 3 equation for the Cauchy transform appearing in Proposition \ref{42}, namely:
$$(\xi G-1)(1-G^2)=mG(1+nG).$$

In principle all the needed information can be obtained from these equations via the Stieltjes inversion formula, and various calculus methods for small degree polynomials.

In what follows we will investigate a question which is particular interest for quantum information theory, as explained by Aubrun in \cite{aub}: the problem is that of deciding, for various values of $m\geq 0$ and $n\geq 1$, whether the support is contained in $[0,\infty)$ or not. The answer here is as follows (for a graphical representation, see Figure \ref{fig:support} below).

\begin{figure}
\includegraphics{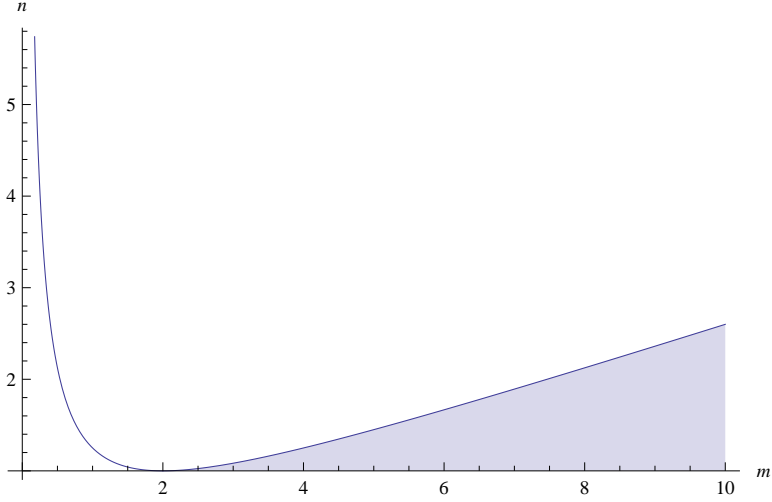}
\caption{Positivity of the support of $\mu_{m,n}$. Measures with parameters belonging to the shaded region ($m \geq 2$, $1 \leq n \leq m/4+1/m$) have positive support.}
\label{fig:support}
\end{figure}

\begin{theorem}\label{thm:support}
The measure $\mu_{m,n}$ has positive support iff $n\leq m/4+1/m$ and $m \geq 2$.
\end{theorem}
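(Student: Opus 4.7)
The plan is to analyze the support of $\mu_{m,n}$ through the $K$-transform
$$K(z) = \frac{1}{z} + R(z) = \frac{1}{z} + \frac{m(1+nz)}{1-z^2}$$
from Proposition~\ref{42}. Since $K$ is the functional inverse of the Cauchy transform $G$ on its real domain of injectivity, the edges of $\mathrm{supp}(\mu_{m,n})$ are precisely the critical values $K(z_*)$ at real critical points $z_*$ of $K$. The mean of $\mu_{m,n}$ is $m > 0$ (Proposition~\ref{42}(1)), so the upper edge of the support is automatically positive; the entire question is whether the lower edge $K(z_-)$ is non-negative.

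The decisive algebraic input is that, after clearing the denominator $z(1-z^2)$, the equation $K(z)=0$ reduces to the quadratic
$$(mn-1)z^2 + m z + 1 = 0,$$
whose discriminant equals $m^2 - 4mn + 4$. Thus $K$ admits a real zero if and only if $m^2-4mn+4 \geq 0$, i.e., $n \leq m/4 + 1/m$, which is the first inequality in the statement.

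The next step is to pin down the parameter boundary at which the lower edge actually passes through $0$. Imposing $K(z_-) = K'(z_-) = 0$ simultaneously forces $z_-$ to be a double root of the quadratic above, giving $m^2 = 4(mn-1)$ (equivalently $n = m/4+1/m$, so that $mn-1 = m^2/4$), and pinning
$$z_- = -\frac{m}{2(mn-1)} = -\frac{2}{m}$$
on this curve. For $z_-$ to genuinely be the lower-edge critical point of $K$—as opposed to a spurious root of the rational equation $K'=0$—it must lie in the interval $(-1,0)$ corresponding to the correct branch of $G$, and the constraint $-1 < -2/m < 0$ rewrites as $m \geq 2$. This is where the second inequality comes from.

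A continuity and connectedness argument then closes the loop. The parameter region $R = \{(m,n):\,m\geq 2,\ 1 \leq n \leq m/4+1/m\}$ is connected, its boundary (inside $\{m > 0,\, n \geq 1\}$) consists precisely of the curve identified above, and $K(z_-)$ depends continuously on $(m,n)$. Hence it suffices to verify the sign of $K(z_-)$ at a single convenient interior point of $R$: taking $n=1$ with $m$ large reduces, by Theorem~\ref{43}(1), to Marchenko--Pastur $\pi_m$ whose support $[(1-\sqrt{m})^2,(1+\sqrt{m})^2]$ is strictly positive, giving $K(z_-) > 0$ throughout the interior of $R$. Conversely, outside $R$ either the quadratic has no real roots ($n > m/4+1/m$) or its real roots lie outside the branch $(-1,0)$ controlling the lower edge ($m < 2$), and in either case a sign analysis forces $K(z_-) < 0$, so the support extends into $(-\infty,0)$. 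The main obstacle will be the bookkeeping around the quartic $Q(z) = (mn-1)z^4 + 2mz^3 + (mn+2)z^2 - 1$ that results from $K'(z)=0$: one must verify that in each parameter regime there is a unique real root of $Q$ producing the lower edge of the support, and consistently track the sign of $K$ at that root as $(m,n)$ crosses each of the two boundary curves.
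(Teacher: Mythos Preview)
Your route through the $K$-transform is genuinely different from the paper's and contains a real simplification. The paper never writes down $K$ explicitly; it works with the degree-$3$ equation for the Cauchy transform $G$, takes its discriminant $\Delta(\xi)$ (a quartic in $\xi$), and then does a lengthy case analysis on the roots of $\Delta$, $\Delta'$, $\Delta''$ to locate the support relative to the origin. Your observation that
\[
K(z)=\frac{(mn-1)z^{2}+mz+1}{z(1-z^{2})}
\]
so that $K(z)=0$ is only a \emph{quadratic}, immediately identifies the curve $n=m/4+1/m$ as the discriminant locus and the curve $m=2$ as the condition $-2/m\in(-1,0)$ for the double root. In the paper these two curves emerge only after factoring $\Delta(0)$ and a page of sign analysis for $\Delta'(0),\Delta''(0)$. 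So your approach \emph{explains} the two boundary curves rather than just computing them.

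That said, what you have written is a plan, not a proof, and the main obstacle you flag is real and not just bookkeeping. The point that needs genuine care is the identification of the \emph{correct} critical point. For $n>1$, $K$ goes to $-\infty$ at both ends of $(-1,0)$, but in parameter regions where the support of $\mu_{m,n}$ has two components the quartic $Q$ can have several real roots in $(-1,0)$; the lower edge of the support is then $K$ evaluated at the \emph{rightmost} critical point there, not at the global maximum of $K$ on $(-1,0)$. Consequently, ``the quadratic has a root in $(-1,0)$'' is not by itself equivalent to ``the lower edge is $\geq 0$'': you must show that any zero of $K$ in $(-1,0)$ actually lies in the interval $[z_-,0)$ on the correct branch. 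Likewise, the continuity argument from $n=1$ must track $z_-$ across the locus where the number of real roots of $Q$ changes (precisely the paper's curve $g(m)$ separating one-interval from two-interval support), and you have not argued that $z_-$ does not jump there. Your ``converse'' for $m<2$ also needs a direct check that both roots of the quadratic lie outside $(-1,0)$, which is an extra computation (true, since the numerator is positive at $z=-1$ and $z=0$, but this is where the work goes). These are all completable, but they are the substance of the proof, and as written they are asserted rather than shown. The paper's discriminant-in-$\xi$ approach trades your conceptual clarity for the advantage that it works directly in the $\xi$-variable, so no branch identification is needed.
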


\begin{proof}
We use the fact that the support of the a.c. part of the probability measure $\mu_{m,n}$ is a union of intervals defined by the points where the analyticity of $G$ breaks. These points are the roots of the discriminant of the equation defining $G$ (seen as an equation in $G$):
\begin{align*}
\Delta(\xi) &= 4 \xi^4-12 m \xi^3+(n^2 m^2+12 m^2-20 n m-8) \xi^2+\\
&\qquad +(-2 n^2 m^3-4 m^3+22 n m^2-20 m) \xi+\\
&\qquad +n^2 m^4-4 n^3 m^3-2 n m^3+12 n^2 m^2+m^2-12 n m+4.
\end{align*}
This is a degree 4 equation in $\xi$ that has either 2 or 4 real solutions (4 complex solutions is not a possibility since the support has to be non-empty). The discriminant of $\Delta(x)$ allows to decide between these cases:
\begin{align*}
\Delta_2(m,n) = -256 m^2 (n-1) (n+1) \left(m^3 n^3+15 m^2 n^2-27 m^2+48 m n-64\right)^3.
\end{align*}
The sign of the discriminant of a quartic equation permits to decide whether the equation has two real and two complex roots or 4 roots of the same type (real or complex): the discriminant is negative iff the quartic has two real and two complex roots. In our case, the sign of $\Delta_2$ is the opposite of the sign of the bivariate polynomial 
\begin{align*}
P(m,n) = m^3 n^3+15 m^2 n^2+48 m n-27 m^2-64.
\end{align*}
Using straightforward calculus, it is easy to see that in the domain of interest here ($m>0$, $n>1$), $P(m, n)<0$ if and only if $m<4$ and $n < g(m)$ where $g(m)$ is the only real root of $P$ (seen as a polynomial in $n$):
$$g(m)\!=\!\frac{1}{2m}\left[3\cdot 2^{2/3} \left(2+m^2+m \sqrt{4+m^2}\right)^{1/3}\!\!\!\!+6\cdot 2^{1/3}\left(2+m^2+m \sqrt{4+m^2}\right)^{-1/3}-10\right].$$
The function $g$ is larger than one for $m \in [0,4]$ and it is decreasing in $m$ in that range; its curve is plotted in Figure \ref{fig:support-proof}. This settles the question of deciding if the support of $\mu_{m,n}$ has one or two disjoint intervals: measures corresponding to parameters in regions A1, A2 and B in Figure \ref{fig:support-proof} have support made of 2 disjoint intervals, whereas regions C and D correspond to measures having connected absolutely continuous support. 

\begin{figure}
\includegraphics{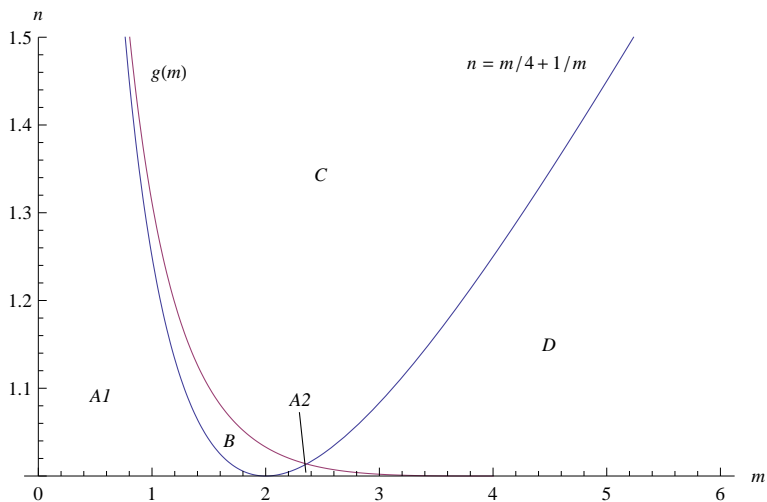}
\caption{Behavior of the support of $\mu_{m,n}$. Regions A1, A2 and B correspond to measures having 2 disjoint intervals for the a.c. support, whereas regions C and D have connected a.c. support. Measures with parameters in A1 have a.c. support made of two intervals separated by the origin and measures in A2 have positive support.}
\label{fig:support-proof}
\end{figure}

Another indicator for positive support is the value at the origin $\xi=0$ of the discriminant $\Delta$:
$$\Delta(0) = (m^2 - 4 m n + 4) (m n - 1)^2.$$
The two simple curves $g(m)$ and $h(m) = m/4+1/m$ partition the set $(0, \infty) \times (1, \infty)$ into five connected components, A1, A2, B, C, D, see Figure \ref{fig:support-proof}. Points in the regions A1, A2 and D have $\Delta(0) >0$, whereas points in the regions B and C have $\Delta(0)<0$. A negative value at the origin indicates (since the leading coefficient of $\Delta(\xi)$ is strictly positive) that zero is in the interior of the support of $\mu_{m,n}$. Hence, positively supported measures correspond to a subset of $A1 \cup A2 \cup D$. Moreover, since the average of $\mu_{m,n}$ is positive and measures in D have connected support, it follows that all the measures in D are positively supported. The unique point of intersection of the graphs of $g$ and $h$ in the domain $m >0$, $n>1$ is $n=\sqrt{-\frac{5}{3}+\frac{14}{3 \sqrt{3}}} \approx 1.01372$, $m=-\frac{9}{77} \left(-23 n+3 n^3\right) \approx 2.35992$.

To conclude, one needs to understand how measures with parameters in A1 and A2 behave. We claim that measures in A1 are supported on $I_1 \cup I_2$ where $I_1 \subset \mathbb R_-$, $I_2 \subset \mathbb R_+$ and measures in A2 are supported on $I_1 \cup I_2$ where $I_1, I_2 \subset \mathbb R_+$. 

To show this, note that both regions A1 and A2 are below $g$ and $h$, hence the support has two connected components $I_1, I_2$ and it does not contain zero. Without losing generality, we assume that $I_1$ is at the left of $I_2$. Three cases need to be considered: zero is at the left of $I_1$, zero separates  $I_1$ and $I_2$ or zero is at the right of $I_2$. The last case cannot occur, since $\mu_{m,n}$ cannot be supported entirely on the negative half line (it has positive mean). To distinguish between the first two cases, we analyze the derivatives of $\Delta$, given by
\begin{align*}
\Delta'(\xi) &= 16 \xi^3-36 m \xi^2+\left(2 n^2 m^2+24 m^2-40 n m-16\right) \xi-\\
&\qquad -2 n^2 m^3-4 m^3+22 n m^2-20 m;\\
\Delta''(\xi) &= 48 \xi^2-72 m \xi+2 n^2 m^2+24 m^2-40 n m-16.
\end{align*}
The sign of the roots of $\Delta'$ allows to distinguish between the two cases above: $\Delta$ has 4 positive roots if and only if $\Delta'$ has 3 positive roots. The other case where $\Delta$ has two positive and two negative roots corresponds to $\Delta'$ having at least one (and at most two) negative roots. Since both $\Delta'$ and $\Delta''$ have negative subleading coefficient, it follows that these polynomials must have at least one positive root. In  Table \ref{tbl:roots}, the nature of the roots of $\Delta'$ is given in terms of the sign of $\Delta'(0)$ and $\Delta''(0)$.

\begin{table}
\begin{tabular}{r|l}
Signs of $\Delta'(0)$, $\Delta''(0)$ & Nature of the roots of $\Delta'$\\
\hline
$\Delta'(0)<0$, $\Delta''(0)<0$ & 2 negative, 1 positive \\
$\Delta'(0)<0$, $\Delta''(0)>0$ & 3 positive \\
$\Delta'(0)>0$, $\Delta''(0)<0$ & 1 negative, 2 positive \\
$\Delta'(0)>0$, $\Delta''(0)>0$ & 1 negative, 2 positive\\
\end{tabular}
\caption{Nature of the roots of $\Delta'(\xi)$ in terms of the signs of $\Delta'(0)$ and $\Delta''(0)$.}
\label{tbl:roots}
\end{table}

Both equations $\Delta'(0)=0$ and $\Delta''(0)=0$ are of second degree in $n$, with solutions
$$n=p_{1,2}(m) = \frac{11\pm\sqrt{81-8 m^2}}{2 m} \qquad m \in (0, 9/\sqrt{8}]$$
and respectively
$$n=q_{1,2}(m) = \frac{10 \pm 2\sqrt{3} \sqrt{9-m^2}}{m} \qquad m \in (0, 3],$$
hence $\Delta'(0)>0$ if and only if $m \in (0, 9/\sqrt{8})$ and $p_1 < m < p_2$ and $\Delta''(0)<0$ if and only if $m \in (0, 3)$ and $q_1 < m < q_2$. These functions are plotted in Figure \ref{fig:pqh}. 

Straightforward computations show that $p_1(m)\geq 1$ for $m \in (0, 5/3] \cup [2, 9/\sqrt{8}]$ and that $h(m)>p_1(m)$ for $m \in (0,2)$. The equation $q_1(m)=1$ has a unique positive solution $5/3 < 2(5+\sqrt{51})/13 < 2$. From these facts, one can conclude that the only couples $(m,n)$ for which $\Delta'(0)<0$ and $\Delta''(0)>0$ (or, equivalently, the equation $\Delta(\xi)=0$ has 4 positive real roots) and which are below the graph of $h$ (i.e. satisfy $n < m/4+1/m$) are those for which $m>2$, which concludes the proof.
\end{proof}

\begin{figure}
\includegraphics{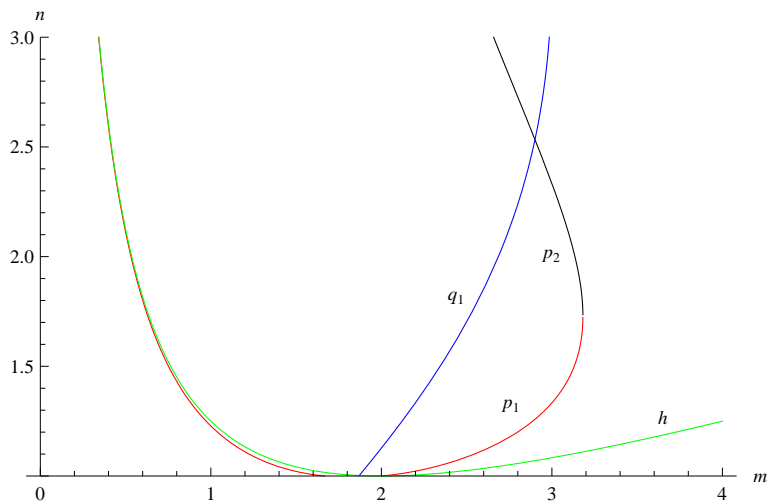}
\caption{Plots of the solutions for the equations $\Delta'(0)=0$ and $\Delta''(0)=0$, along with $h(m)$.}
\label{fig:pqh}
\end{figure}

\section{Concluding remarks}\label{sec:final}

We have seen in this paper that Aubrun's recent and surprising result in \cite{aub} on the block-transposed Wishart matrices, which basically says that ``when performing the block transposition the Marchenko-Pastur law becomes a shifted semicircle law'' is best understood first by enlarging the set of parameters, and then by using free probability theory. Indeed, by replacing Aubrun's parameters $(d^2,\alpha d^2)$ by general parameters $(dn,dm)$, and then by doing a free probabilistic study, our conclusion is that ``when performing the block transposition, the free Poisson law becomes a free difference of free Poisson laws''. 

With this new point of view, several problems appear. First, since both in the usual Wishart and in the block-transposed Wishart cases we simply reach to certain ``free linear combinations of free Poisson laws'', one may wonder about a general result, covering both the Wishart and block-transposed Wishart cases. In addition, the random matrix result in \cite{bb+}, once again dealing with some ``block-modified'' Wishart matrices, and once again leading to certain free combinations of free Poisson laws, is waiting as well to be generalized. There are probably some connections as well with the work of Lenczewski in \cite{len}. We intend to come back to these algebraic questions, and to perform as well a systematic study of the resulting asymptotic measures, in a forthcoming paper \cite{bne}.

As explained in the introduction, it is of interest for quantum information theory if the partial transpose of a typical matrix (or quantum state) is positive or not. For a fixed choice of parameters $m$ and $n$, having an asymptotic eigenvalue counting measure which is supported on the positive half line is an indication that typical states are PPT. However, in order to decide if a quantum state is PPT or not, one needs to look at the smallest eigenvalue of the corresponding matrix. Aubrun has shown in \cite{aub} that the smallest eigenvalue converges indeed to the infimum of the support of the asymptotic measure; we conjecture that the same holds true in our setting.

\end{document}